\renewcommand{\baselinestretch}{\baselinestretch}
\renewcommand{\baselinestretch}{1.1}
\numberwithin{equation}{section}
\newtheorem{thm}{Theorem}[section]
\newtheorem{lem}[thm]{Lemma}
\theoremstyle{definition}
\theoremstyle{remark}
\newtheorem{rmk}[thm]{Remark}
\numberwithin{equation}{section}
\newcommand{\gen}{\text{gen}}
\newcommand{\z}{{\mathbb Z}}
\newcommand{\Mod}[1]{\ (\mathrm{mod}\ #1)}
\begin{document}
%%%%%%%%%%%%%%%%%%%%%%%%%%%%%%%%%%%%%%%%%%%%%%%%%%%%%%%%%%%%%%%%%%%%%%%%%%%%%%%%%%
%%%%%%%%%%%%%%%%%%%%%%%%%%%%%%%%%%%%%%%%%%%%%%%%%%%%%%%%%%%%%%%%%%%%%%%%%%%%%%%%%
\title[Representations by a ternary sum of triangular numbers]{The number of representations by a ternary sum of triangular numbers}

\author{ Mingyu Kim and Byeong-Kweon Oh}

\address{Department of Mathematical Sciences, Seoul National University, Seoul 151-747, Korea}
\email{kmg2562@snu.ac.kr}

\address{Department of Mathematical Sciences and Research Institute of Mathematics, Seoul National University, Seoul 151-747, Korea}
\email{bkoh@snu.ac.kr}

\thanks{This work of the second author was supported by the National Research Foundation of Korea (NRF-2017R1A2B4003758).}

\subjclass[2000]{Primary 11E12, 11E20} \keywords{Representations of ternary quadratic forms, squares}

%%% ----------------------------------------------------------------------

\begin{abstract} For positive integers $a,b,c$, and an integer $n$, the number of integer solutions $(x,y,z) \in \z^3$ of  $a \frac{x(x-1)}{2} + b \frac{y(y-1)}{2} + c \frac{z(z-1)}{2} = n$ is denoted by $t(a,b,c;n)$. In this article, we prove some relations between $t(a,b,c;n)$ and the numbers of representations of integers by some ternary quadratic forms. In particular, we prove various conjectures given by Z. H. Sun in \cite{s}.         
\end{abstract}

\maketitle

\section{Introduction}
For a positive integer $x$, a non negative integer of the form $T_x=\frac{x(x-1)}2$ is called a {\it triangular number}. For example, $0,1,3,6,10,15,\dots$ are triangular numbers. Since $T_x=T_{1-x}$, $T_x$ is a triangular number for any integer $x$. For positive integers $a_1,a_2,\dots a_k$, a polynomial of the form 
$$
\mathcal T_{(a_1,\dots,a_k)}(x_1,\dots,x_k)=a_1T_{x_1}+a_2T_{x_2}+\cdots+a_kT_{x_k}
$$
is called a $k$-ary sum of triangular numbers. For a non negative integer $n$, we define 
$$
T(a_1,\dots,a_k;n)=\left\{(x_1,\dots,x_k) \in \z^k : \mathcal T_{(a_1,\dots,a_k)}(x_1,\dots,x_k)=n \right\}
$$ 
and $t(a_1,\dots,a_k;n)=\vert T(a_1,\dots,a_k;n)\vert$. One may easily show that 
$$
t(a_1,\dots,a_k;n)=\vert\{ (x_1,\dots,x_k) \in (\z_o)^k : a_1x_1^2+\cdots+a_kx_k^2=8n+a_1+\cdots+a_k\}\vert,
$$   
where $\z_o$ is the set of all odd integers. Hence $t(a_1,\dots,a_k;n)$ is closely related with the number of representations by some diagonal quadratic form of rank $k$. For example, if $k=3$ and $a_1=a_2=a_3=1$, then every integer solution $(x,y,z)$ of $x^2+y^2+z^2=8n+3$ is in $(\z_o)^3$. Therefore, for any positive integer $n$, we have  
$$
t(1,1,1;n)=\vert\{ (x,y,z) \in \z^3 : x^2+y^2+z^2=8n+3\}\vert=24H(-(8n+3)),
$$
where $H(-D)$ is the Hurwitz class number with discriminant $-D$. For various results in this direction, see \cite{ach}, \cite{bch}, \cite{s0} and \cite{y}. 

 Recently, Sun proved in \cite{s} various relations between $t(a_1,\dots,a_k;n)$ and the numbers of representations of integers by some diagonal quadratic forms. He also conjectured various relations between $t(a,b,c;n)$ and the numbers of representations by some ternary diagonal quadratic forms. 

In this article, we consider the number $t(a,b,c;n)$ of representations by a ternary sum of triangular numbers. 
We show that  for any positive integers  $a,b,c$ such that $(a,b,c)=1$,  $t(a,b,c;n)$ is equal to the number of representations of a subform  of the ternary diagonal quadratic form $ax^2+by^2+cz^2$, if $a+b+c$ is not divisible by $8$, or a difference of the numbers of representations of two ternary quadratic forms otherwise. 

In Section 3, we prove all conjectures in \cite{s} on ternary sums of triangular numbers, which are Conjectures 6.1$\sim$6.4 and 6.7. In fact, we generalize Conjectures 6.1 and 6.2 in \cite{s}, and prove these generalized conjectures.  Note that Conjectures 6.5 and 6.6 in \cite {s} are about quaternary sums of triangular numbers, which will be treated later.       

An integral quadratic form  $f(x_1,x_2,\dots,x_k)$ of rank $k$ is a degree $2$ homogeneous polynomial
$$
f(x_1,x_2,\dots,x_k)=\sum_{1\le i,j\le k} a_{ij}x_ix_j \quad (a_{ij}=a_{ji} \in \z).
$$
We always assume that $f$ is positive definite, that is, the corresponding symmetric matrix $(a_{ij}) \in M_{k\times k}(\z)$ is positive definite. 
If $a_{ij}=0$ for any $i\ne j$, then we simply write $f=\langle a_{11},\dots,a_{kk}\rangle$. For an integer $n$, if the Diophantine equation $f(x_1,x_2,\dots,x_k)=n$ has an integer solution, then we say $n$ is {\it represented by $f$}. We define
$$
R(f,n)=\{(x_1,\dots,x_k) \in \mathbb Z^k : f(x_1,\dots,x_k)=n\},
$$ 
and $r(f,n)=\vert R(f,n)\vert$. Since we are assuming that $f$ is positive definite, the above set is always finite.
The genus of $f$, denoted by $\text{gen}(f)$, is the set of all quadratic forms that are locally isometric to $f$. The number of isometry classes in $\text{gen}(f)$ is called the class number of $f$.

Any unexplained notations and terminologies on integral quadratic forms can be found in \cite{Ki} or \cite{OM}. 

%%%%%%%%%%%%%%%%%%%%%%%%%%%%%%%%%%%%%%%%%%%%%%%%%%%%%%%%%%%%%%%%%%%%%%%%%%%%%%%%%
%%%%%%%%%%%%%%%%%%%%%%%%%%%%%%%%%%%%%%%%%%%%%%%%%%%%%%%%%%%%%%%%%%%%%%%%%%%%%%%%%
\section{Representations of ternary sums of triangular numbers}
%%%%%%%%%%%%%%%%%%%%%%%%%%%%%%%%%%%%%%%%%%%%%%%%%%%%%%%%%%%%%%%%%%%%%%%%%%%%%%%%%
%%%%%%%%%%%%%%%%%%%%%%%%%%%%%%%%%%%%%%%%%%%%%%%%%%%%%%%%%%%%%%%%%%%%%%%%%%%%%%%%%

Let $a,b$ and $c$ be positive integers such that $(a,b,c)=1$. Throughout this section, we assume, without loss of generality, that $a$ is odd. We show that the number $t(a,b,c;n)$ is equal to the number of representations of a subform  of the ternary diagonal quadratic form $ax^2+by^2+cz^2$, if $a+b+c$ is not divisible by $8$, or a difference of the numbers of representations of two ternary quadratic forms otherwise. 

Let $f(x,y,z)=ax^2+by^2+cz^2$ be a ternary diagonal quadratic form. Recall that 
$$
t(a,b,c;n)=\vert\{ (x,y,z) \in \z^3 : f(x,y,z)=n, \ xyz\equiv 1 \Mod 2\}\vert.
$$

\begin{lem} \label{odd} Assume that $a+b+c$ is odd. For any positive integer $n$, we have 
$$
t(a,b,c;n)=r(f(x,x-2y,x-2z),8n+a+b+c).
$$
In particular, if $a\equiv b \equiv c \Mod 4$, then  we have
$$
t(a,b,c;n)=r(f(x,y,z), 8n+a+b+c).
$$
\end{lem}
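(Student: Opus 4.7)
The plan is to exploit the substitution $(u,v,w) = (x,\,x-2y,\,x-2z)$, which gives a bijection between $\z^3$ and the set of triples $(u,v,w) \in \z^3$ with $u \equiv v \equiv w \pmod 2$. Under this bijection, the inverse being $(x,y,z) = (u,\,(u-v)/2,\,(u-w)/2)$, the quantity $r(f(x,x-2y,x-2z),\,N)$ rewrites as the number of $(u,v,w) \in \z^3$ with $au^2+bv^2+cw^2 = N$ and $u,v,w$ all of the same parity.

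With $N = 8n+a+b+c$, the hypothesis that $a+b+c$ is odd makes $N$ odd. Recall that we have assumed $a$ is odd. If $u,v,w$ were all even, then $au^2+bv^2+cw^2$ would be divisible by $4$, contradicting that $N$ is odd. So any representation of $N$ by $f$ with $u \equiv v \equiv w \pmod 2$ must have $u,v,w$ all odd, and the count therefore equals
\[
\bigl|\{(u,v,w) \in (\z_o)^3 : au^2+bv^2+cw^2 = 8n+a+b+c\}\bigr| = t(a,b,c;n),
\]
which is the first identity. I expect the main (mild) obstacle here is just being careful that the parity-matching substitution is genuinely a bijection between all of $\z^3$ and the same-parity triples, and not inadvertently double-counting.

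For the second assertion, suppose additionally that $a \equiv b \equiv c \pmod 4$; write $a \equiv b \equiv c \equiv r \pmod 4$ with $r \in \{1,3\}$. I would drop the constraint that the variables be odd and show it is automatic. Reducing $au^2+bv^2+cw^2 = N$ modulo $4$ gives
\[
r(u^2+v^2+w^2) \equiv a+b+c \equiv 3r \pmod 4,
\]
and since $r$ is a unit mod $4$ this yields $u^2+v^2+w^2 \equiv 3 \pmod 4$. The only way a sum of three squares is $\equiv 3 \pmod 4$ is for each square individually to be $\equiv 1 \pmod 4$, forcing $u,v,w$ all odd. Hence every solution to $f(u,v,w)=N$ is automatically an odd solution, so $r(f,\,8n+a+b+c) = t(a,b,c;n)$, completing the proof.
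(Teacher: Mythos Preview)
Your proof is correct and follows essentially the same approach as the paper: both use the change of variables $(u,v,w)=(x,x-2y,x-2z)$ to identify $R(g,8n+a+b+c)$ with the same-parity solutions of $f=8n+a+b+c$, and then rule out the all-even case by parity of $N$. Your write-up is in fact more explicit than the paper's (which simply declares the map bijective), and your mod~$4$ argument for the second assertion spells out precisely what the paper leaves as ``one may easily show.''
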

\begin{proof} Let $g(x,y,z)=f(x,x-2y,x-2z)$. Define a map $\phi : T(a,b,c;n) \to R(g,n)$ by $\phi(x,y,z)=(x,\frac{x-y}2,\frac{x-z}2)$. Then one may easily show that it is a bijective map. 

  Now, assume that  $a\equiv b \equiv c \Mod 4$. If $ax^2+by^2+cz^2=8n+a+b+c$ for some integers $x,y$ and $z$, then one may easily show that $x,y$ and $z$ are all odd. The lemma follows directly  from this. 
\end{proof}

\begin{lem} \label{even} Assume that $S=a+b+c$ is even, and without loss of generality, we also assume that both $a$ and $b$ are odd and $c$ is even. Then, for any positive integer $n$, we have 
$$
t(a,b,c;n)\!=\!\begin{cases} r(f(x,y,z),8n+S)\!\!\!\!  &\text{if $S \equiv 2 \Mod 4$ and $c \equiv 4 \Mod 8$},\\ 
                         r(f(x,y,y-2z),8n+S)\! \!\!\! &\text{if $S \equiv 2 \Mod 4$ and $c \not \equiv 4 \Mod 8$},\\ 
                          2r(f(x,x-4y,z),8n+S)\! \!\!\! &\text{if $S \equiv 4 \Mod 8$ and $c  \equiv 2 \Mod 4$},\\ 
                          2r(f(x,x-4y,x-2z),8n+S)\! \!\!\!\! &\text{if $S \equiv 4 \Mod 8$ and $c \equiv 0 \Mod 4$},\\ 
\end{cases}                          
$$
and if $S \equiv 0 \Mod 8$, then 
$$
t(a,b,c;n)=r(f(x,x-2y,x-2z),8n+S)-r\left(f(x,y,z),2n+\displaystyle\frac{S}4\right). 
$$  
\end{lem}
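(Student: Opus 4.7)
The plan is to mimic the approach of Lemma \ref{odd}: for each case, construct an explicit linear substitution realizing $r(f(\cdots),8n+S)$ as the count of integer solutions of the diagonal equation $au^2+bv^2+cw^2 = 8n+S$ subject to certain congruence conditions, and then perform a mod-$4$ / mod-$8$ parity analysis (exploiting the hypotheses on $a,b,c,S$) to match this count against $t(a,b,c;n)$, which counts the all-odd solutions.

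Write $N=8n+S$. First I would treat the four subcases with $S \not\equiv 0 \Mod{8}$. When $S \equiv 2 \Mod{4}$ and $c \equiv 4 \Mod{8}$, no substitution is needed: reducing modulo $4$ gives $ax^2+by^2 \equiv 2 \Mod{4}$, forcing $x,y$ odd, after which reducing modulo $8$ yields $cz^2 \equiv 4 \Mod{8}$, and together with $c \equiv 4 \Mod{8}$ this forces $z$ odd. When $S \equiv 2 \Mod{4}$ and $c \not\equiv 4 \Mod{8}$, the substitution $(u,v,w)=(x,y,y-2z)$ puts $r(f(x,y,y-2z),N)$ in bijection with $\{(u,v,w) \in \z^3 : au^2+bv^2+cw^2=N,\ v \equiv w \Mod{2}\}$, and a mod-$4$ analysis (split by whether $c \equiv 0 \Mod{8}$ or $c \equiv 2 \Mod{4}$) then shows this parity condition together with the equation force $u,v,w$ all odd.

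For $S \equiv 4 \Mod{8}$ I would use the substitution $(u,v)=(x,x-4y)$ (and additionally $w=x-2z$ when $c \equiv 0 \Mod{4}$) to identify the right-hand side with the number of integer solutions of $au^2+bv^2+cw^2=N$ satisfying $u \equiv v \Mod{4}$ (plus $u \equiv w \Mod{2}$ in the second subcase). A mod-$4$ argument shows the solutions split into an all-odd class and an all-even class. The all-even contribution vanishes: after rescaling by $2$, the reduced equation becomes $au'^2+bv'^2+cw'^2 = N/4 = 2n+S/4$ with $S/4$ odd, so reducing mod $2$ gives $u' \not\equiv v' \Mod{2}$, contradicting $u \equiv v \Mod{4}$. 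Among the all-odd solutions the involution $v \mapsto -v$ swaps $\{u \equiv v \Mod{4}\}$ with $\{u \equiv -v \Mod{4}\}$, so exactly half satisfy $u \equiv v \Mod{4}$, which accounts for the factor of $2$.

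Finally, for $S \equiv 0 \Mod{8}$ the substitution $(u,v,w)=(x,x-2y,x-2z)$ realizes $r(f(x,x-2y,x-2z),N)$ as the number of integer solutions of $au^2+bv^2+cw^2=N$ with $u \equiv v \equiv w \Mod{2}$, which splits into the all-odd solutions (contributing $t(a,b,c;n)$) and the all-even solutions (which, after dividing through by $4$, are in bijection with $R(f,2n+S/4)$); rearranging then yields the stated formula. The main technical obstacle is simply the case-by-case parity bookkeeping: verifying in each case that the congruence constraint induced by the substitution captures exactly the all-odd solutions with the correct multiplicity, and, in the last case, that the all-even contribution is cleanly isolated as a representation number for $f$ itself at a rescaled argument.
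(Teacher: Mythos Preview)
Your proposal is correct and follows essentially the same approach as the paper: identify each substituted form with the diagonal equation under a congruence constraint, then use a parity analysis to match that constrained set with the all-odd solutions (and, in the $S\equiv 0\Mod 8$ case, peel off the all-even solutions as $r(f,2n+S/4)$). The paper in fact only writes out the fourth case and argues via the single observation $f(x,x-4y,x-2z)\equiv Sx^2\Mod 8$ to force $x$ odd, whereas you eliminate the all-even branch by rescaling and reaching a parity contradiction; these are equivalent routes to the same conclusion.
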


\begin{proof} Since the proof is quite similar to each other, we only provide the proof of the fourth case, that is, the case when $S \equiv 4 \Mod 8$ and $c \equiv 0 \Mod 4$.  Let $g(x,y,z)=f(x,x-4y,x-2z)$. We define a map
$$
\begin{array} {rl}  
&\psi : \{ (x,y,z) \in (\z_o)^3  : f(x,y,z)=8n+S, \ x \equiv y \Mod 4\} \\ [0.5em]
 & \hskip 2pc  \to \{ (x,y,z) \in \z^3  : g(x,y,z)=8n+S\} \ \  \text{by} \ \ \psi(x,y,z)=\left(x,\displaystyle \frac {x-y}4,\frac{x-z}2\right).
\end{array}
$$   
From the assumption, it is well defined.  Conversely, assume that $g(x,y,z)=8n+S$ for some $(x,y,z) \in \z^3$. 
Since
$$
f(x,x-4y,x-2z)=ax^2+b(x-4y)^2+c(x-2z)^2\equiv ax^2+bx^2+cx^2\equiv Sx^2 \equiv S \Mod 8
$$   
and $S \equiv 4 \Mod 8$, the integer $x$ is odd.  Therefore, the map $(x,y,z) \to (x,x-4y,x-2z)$ is an inverse map of $\psi$.
The lemma follows from this and the fact that 
$$
t(a,b,c;n)=2\vert\{ (x,y,z) \in (\z_o)^3  : f(x,y,z)=8n+S, \ x \equiv y \Mod 4\}\vert.
$$
This completes the proof.  \end{proof}
%%%%%%%%%%%%%%%%%%%%%%%%%%%%%%%%%%%%%%%%%%%%%%%%%%%%%%%%%%%%%%%%%%%%%%%%%%%%%%%%%
%%%%%%%%%%%%%%%%%%%%%%%%%%%%%%%%%%%%%%%%%%%%%%%%%%%%%%%%%%%%%%%%%%%%%%%%%%%%%%%%%
\section{Sums of triangular numbers and diagonal quadratic forms}
%%%%%%%%%%%%%%%%%%%%%%%%%%%%%%%%%%%%%%%%%%%%%%%%%%%%%%%%%%%%%%%%%%%%%%%%%%%%%%%%%
%%%%%%%%%%%%%%%%%%%%%%%%%%%%%%%%%%%%%%%%%%%%%%%%%%%%%%%%%%%%%%%%%%%%%%%%%%%%%%%%%

In this section, we generalize some conjectures  given by Sun in \cite{s} on the relations between $t(a,b,c;n)$ and  the numbers of representations of integers by some ternary quadratic forms, and prove these conjectures. 
 
Let $f(x_1,x_2,\dots,x_k)$ be an integral quadratic form of rank $k$ and let $n$ be an integer. For a vector $\bold d=(d_1,\dots,d_k) \in (\z/2\z)^k$, we  define
$$
R_{\bold d}(f,n)=\{ (x_1,\dots,x_k) \in R(f,n) : (x_1,\dots,x_k) \equiv (d_1,\dots,d_k) \ (\text{mod} \ 2)\}.
$$
The cardinality of the above set will be denoted by $r_{\bold d}(f,n)$. Note that
$$
t(a,b,c;n)=r_{(1,1,1)}(ax^2+by^2+cz^2,8n+a+b+c). 
$$
We also define
$$
\widetilde R_{(1,1)}(ax^2+by^2,N)=\{ (x,y) \in  R_{(1,1)}(ax^2+by^2,N) : x \not \equiv y \ (\text{mod } 4)\}.
$$
Note that if we define the cardinality of $\widetilde R_{(1,1)}(ax^2+by^2,N)$ by $\widetilde{r}_{(1,1)}
(ax^2+by^2,N)$, then we have
$$
r_{(1,1)}(ax^2+by^2,N)=2\cdot \widetilde{r}_{(1,1)}(ax^2+by^2,N).
$$

\begin{lem} \label{lem1} Let $m$ be a positive integer. 
\begin{itemize}
\item [(i)]
If $m \equiv 1 \Mod{4}$, then we have 
$$
2r_{(1,0)}(x^2+3y^2,m)=r_{(1,1)}(x^2+3y^2,4m).
$$
\item [(ii)]
If $m \equiv 3 \Mod{4}$, then we have 
$$
2r_{(0,1)}(x^2+3y^2,m)=r_{(1,1)}(x^2+3y^2,4m).
$$
\item [(iii)]
If $m \equiv 4 \Mod{8}$, then we have 
$$
2r_{(0,0)}(x^2+3y^2,m)=r_{(1,1)}(x^2+3y^2,m).
$$
\end{itemize}
\end{lem}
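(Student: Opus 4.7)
The plan is to exploit the multiplicative structure of $\z[\sqrt{-3}]$, in which $x^2+3y^2$ is the norm form. Since $N(1\pm\sqrt{-3})=4$, multiplication by either of these elements carries a representation of $m$ to a representation of $4m$. I will get parts (i) and (ii) directly from this, and then reduce (iii) to them.

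First I would record a parity preliminary: if $x^2+3y^2=m$ and $m\equiv 1\Mod{4}$, then any such $(x,y)$ must have $x$ odd and $y$ even, while if $m\equiv 3\Mod{4}$, then $x$ is even and $y$ is odd (the remaining parity patterns give residues $0$ or $3$ modulo $4$). For (i) I would then define
$$
\phi_+(x,y)=(x-3y,\,x+y),\qquad \phi_-(x,y)=(x+3y,\,y-x),
$$
corresponding to multiplication by $1+\sqrt{-3}$ and $1-\sqrt{-3}$. Each preserves the norm up to the factor $4$, and using $x$ odd, $y$ even, each outputs a pair of odd integers, so both maps send $R_{(1,0)}(x^2+3y^2,m)$ into $R_{(1,1)}(x^2+3y^2,4m)$. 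The decisive observation is that if $(X,Y)=\phi_+(x,y)$ then $X-Y=-4y\equiv 0\Mod{4}$, whereas $\phi_-(x,y)$ gives $X-Y=2(x+y)\equiv 2\Mod{4}$. Since any two odd integers $X,Y$ satisfy exactly one of $X\equiv Y\Mod{4}$ or $X\equiv -Y\Mod{4}$, the images of $\phi_+$ and $\phi_-$ partition the target. I would establish invertibility of each $\phi_\pm$ by dividing $X+Y\sqrt{-3}$ by the corresponding norm-$4$ element and matching the resulting integrality requirement with the appropriate mod-$4$ case; the parity preliminary then guarantees that the produced pair $(x,y)$ sits in $R_{(1,0)}$ automatically, because $m\equiv 1\Mod{4}$ forces that parity. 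This gives (i). Part (ii) runs along identical lines: for $m\equiv 3\Mod{4}$ the parity roles of $x$ and $y$ swap, but the same $\phi_\pm$ still produce pairs of odd coordinates and the same mod-$4$ partition applies to $R_{(1,1)}(x^2+3y^2,4m)$.

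For (iii), I would write $m=4m'$ with $m'$ odd. The map $(x,y)\mapsto(x/2,y/2)$ is a bijection between $R_{(0,0)}(x^2+3y^2,m)$ and $R(x^2+3y^2,m')$, and by the parity preliminary the latter coincides with $R_{(1,0)}(x^2+3y^2,m')$ if $m'\equiv 1\Mod{4}$ and with $R_{(0,1)}(x^2+3y^2,m')$ if $m'\equiv 3\Mod{4}$. Applying (i) or (ii) accordingly then yields $r_{(1,1)}(x^2+3y^2,m)=r_{(1,1)}(x^2+3y^2,4m')=2\,r_{(0,0)}(x^2+3y^2,m)$, as required. The only real hurdle is the mod-$4$ bookkeeping for (i) and (ii)---in particular the dichotomy that odd pairs $(X,Y)$ split cleanly into the classes $X\equiv Y\Mod{4}$ and $X\equiv -Y\Mod{4}$, matched precisely to the images of $\phi_+$ and $\phi_-$; once this is verified the rest is routine computation.
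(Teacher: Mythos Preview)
Your proof is correct and follows essentially the same route as the paper: the paper's bijections $\psi_i(x,y)=(x+3y,-x+y)$ are precisely your map $\phi_-$, and the factor of $2$ is handled there by mapping onto the half-set $\widetilde R_{(1,1)}=\{X\not\equiv Y\Mod 4\}$ rather than by using both $\phi_+$ and $\phi_-$ to cover the full $R_{(1,1)}$. For (iii) the paper writes down the composite $(x,y)\mapsto\bigl(\tfrac{x+3y}{2},\tfrac{-x+y}{2}\bigr)$ directly, which is exactly your ``divide by $2$ then apply (i)/(ii)'' reduction unwound.
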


\begin{proof} (i) Note that the map 
$$
\psi_1 : R_{(1,0)}(x^2+3y^2,m) \to 
\widetilde{R}_{(1,1)}(x^2+3y^2,4m) \ \ 
\text{defined by}  \ \ \psi_1(x,y)=(x+3y,-x+y)
$$
 is a bijective map. 
 
\noindent (ii) If we define a map 
$$
\psi_2 : R_{(0,1)}(x^2+3y^2,m) \to 
\widetilde{R}_{(1,1)}(x^2+3y^2,4m)  \ \ 
\text{by}  \ \  \psi_2(x,y)=(x+3y,-x+y),
$$ then one may easily check that it is a  bijective map. 

\noindent (iii) One may easily show that if we define a map
$$
\psi_3 :{R}_{(0,0)}
(x^2+3y^2,m) \to \widetilde R_{(1,1)}(x^2+3y^2,m)
\ \ 
\text{by} \ \   \psi_3(x,y)=\displaystyle \left(\frac {x+3y}2,\frac{-x+y}2\right),
$$
then it is a bijective map. 
\end{proof}

%%%%%%%%%%%%%%%%%%%%%%%%%%%%%%%%%%%%%%%%%%%%%%%%%%%%%%%%%%%%%%%%%%%%%%%%%%%%%%

\begin{lem} \label{lem2} Let $a,b \ (a<b)$ be positive odd integers such that $\gcd(a,b)=1$ and 
$a+b \equiv 0 \Mod{8}$. Then  
\begin{equation} \label{complete}
r_{(1,1)}(ax^2+by^2,m)=r_{(1,1)}(ax^2+by^2,4m)
\end{equation}
for any integer $m$ divisible by $8$ if and only if $(a,b) \in \{ (3,5), (1,7), (1,15) \}$.
\end{lem}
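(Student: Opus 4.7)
The plan is to first reduce \eqref{complete} to an equivalent identity on the full representation numbers of $f=ax^2+by^2$. Since $a,b$ are both odd with $a+b\equiv 0\pmod 8$ and $8\mid m$, a parity check modulo $4$ shows that no representation of $m$ by $f$ can have $x,y$ of opposite parities: an (odd, even) solution yields $f\equiv a\pmod 4$, odd, contradicting $8\mid m$. Combined with the trivial bijection $(x,y)\mapsto(2x,2y)$, which supplies $r_{(0,0)}(f,m)=r(f,m/4)$, this gives
\[
r(f,m)=r_{(1,1)}(f,m)+r(f,m/4),\qquad r(f,4m)=r_{(1,1)}(f,4m)+r(f,m).
\]
Substituting, \eqref{complete} is equivalent to the Hecke-like relation
\[
r(f,4m)+r(f,m/4)=2\,r(f,m)\qquad\text{for every }m\text{ with }8\mid m,
\]
i.e., the sequence $k\mapsto r(f,4^km_0)$ is arithmetic for every base $m_0$ divisible by $8$.

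\textbf{Sufficiency.} For each listed pair I would verify the relation above by constructing explicit bijections in the style of Lemma \ref{lem1}. For $f=x^2+7y^2$, the prime $2$ splits in $\mathbb Q(\sqrt{-7})$ with $\omega=(1+\sqrt{-7})/2$ of norm $2$; multiplication of $z=x+y\sqrt{-7}$ by $\omega^2=(-3+\sqrt{-7})/2$ yields an element of $\mathbb Z[\sqrt{-7}]$ whose two coordinates are both odd precisely when $x\equiv y\pmod 4$, while $\bar\omega^2$ handles the complementary case $x\not\equiv y\pmod 4$. Together they assemble the bijection $R_{(1,1)}(f,m)\to R_{(1,1)}(f,4m)$. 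For $(3,5)$ and $(1,15)$, which share discriminant $-60$ and form a genus, the analogous multiplication maps in $\mathbb Z[\sqrt{-15}]$ combined with the class-group pairing of the two forms yield the required bijections.

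\textbf{Necessity.} For each $(a,b)$ outside the list, the primary counterexample is $m=a+b$: the four representations $(\pm 1,\pm 1)$ give $r_{(1,1)}(f,a+b)\ge 4$, while any odd representation of $ax^2+by^2=4(a+b)$ with $a<b$ is forced by the size bound $|y|<3$ to have $y=\pm 1$ and $a(x^2-4)=3b$. Together with $\gcd(a,b)=1$ and $b$ odd, this constrains $a\in\{1,3\}$, and enumerating odd $x\ge 3$ produces the three listed pairs together with a sporadic family $(3,77),(1,39),(1,55),(3,221),\ldots$. For non-listed, non-sporadic pairs, $m=a+b$ directly yields $r_{(1,1)}(f,4(a+b))=0$, a counterexample. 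For each sporadic pair I would instead use $m=9a+b$: here $(\pm 3,\pm 1)$ gives $r_{(1,1)}(f,9a+b)\ge 4$, while odd reps of $4(9a+b)=36a+4b$ must have $y=\pm 1$ (the case $y=\pm 3$ requires $5b\le 36a$, excluded for sporadic $b$) and $x^2=36+3b/a$. Substituting the sporadic condition $3b=a(x_0^2-4)$ gives $x^2-x_0^2=32$, whose sole positive odd solution $(x,x_0)=(9,7)$ corresponds to $(1,15)$ (already listed) for $a=1$ and is excluded for $a=3$ by the requirement $3\mid x_0$ coming from $\gcd(3,b)=1$. Thus $r_{(1,1)}(f,4(9a+b))=0<4$, providing the counterexample.

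\textbf{Main obstacle.} The chief difficulty lies in the sufficiency direction for $(3,5)$ and $(1,15)$: since the class group of $\mathbb Z[\sqrt{-15}]$ is nontrivial, the natural multiplication maps may interchange the two form classes, so producing a bijection internal to each form requires carefully combining contributions from both classes while tracking the $(1,1)$-residue condition modulo $2$. A cleaner alternative, which I would consider as a fallback, is to recast the Hecke-like identity as the statement that $\theta_f$ is a $T_2$-eigenform on the slice $8\mid m$, and to invoke the classification of weight-one theta eigenforms of small discriminant to single out precisely the three listed pairs.
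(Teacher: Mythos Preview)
Your necessity argument is essentially correct and close in spirit to the paper's, though organized differently: the paper cases on $u=v_2(a+b)$ and uses the test values $m=2^{u-2}k$, $m=4k+12a$, $m=16k$, $m=8k$, $m=8k+8a$, while you use $m=a+b$ universally and then $m=9a+b$ for the exceptional pairs. Both routes reduce to the same Pell-type equation $x^2-x_0^2=32$ at the final step, so nothing is lost or gained either way.

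The gap is in your sufficiency direction for $(3,5)$ and $(1,15)$. You flag the nontrivial class group of $\mathbb{Q}(\sqrt{-15})$ as the ``main obstacle,'' worrying that a norm-$4$ multiplication might interchange the two form classes. But this concern is misplaced: since the class group has order $2$ and the prime $\mathfrak p$ above $2$ is non-principal, $\mathfrak p^2$ is principal, generated by an element of norm $4$; multiplication by this generator stays within each form class. The paper bypasses the ideal-theoretic language entirely and just writes down the resulting linear maps
\[
\chi_1(x,y)=\Bigl(\tfrac{x-5y}{2},\,\tfrac{3x+y}{2}\Bigr),\qquad
\chi_3(x,y)=\Bigl(\tfrac{x+15y}{2},\,\tfrac{-x+y}{2}\Bigr)
\]
from $\widetilde R_{(1,1)}(f,m)$ to $\widetilde R_{(1,1)}(f,4m)$, and checks by hand that they are bijections (a two-line computation in each case). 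So the ``obstacle'' dissolves into a routine verification; your fallback via weight-one eigenforms would work but is unnecessary.
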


\begin{proof}
Assume that Equation \eqref{complete} holds for any integer $m$ divisible by $8$. 
Let $a+b=2^u k$ for some integer $u \ge 3$ and an odd integer $k$. Note that $1 \le a < 2^{u-1} k$. 

First, we assume $u \ge 5$. 
Since 
$$
a\cdot 1^2+(2^u k-a)\cdot 1^2=4\cdot 2^{u-2} k  \quad \text{and} \quad 2^{u-2}k \equiv 0 \Mod{8},
$$ 
there exist odd integers $x$ and $y$ satisfying 
$ax^2+(2^u k-a)y^2=2^{u-2}k$, which is a contradiction.

Next, assume that $u=4$. 
Since 
$$
a \cdot 7^2+(16k-a)\cdot 1^2=4(4k+12a) \quad \text{and} \quad 4k+12a \equiv 0 \Mod{8},
$$ 
there exist two odd integers $x_1,y_1$ such that $ax_1^2+(16k-a)y_1^2=4k+12a$.
Thus, $4k+12a \ge 16k$ and hence $k \le a$. 
Now, since $a\cdot 1^2 +(16k-a)\cdot 1^2=16k$,
there are two positive odd integers $x_2,y_2$ with $ax_2^2+(16k-a)y_2^2=64k$. 
Since $16k-a>8k$ by assumption, we have $y_2^2=1$. 
Furthermore, since $ax_2^2=a+48k \le 49a$, $(x_2,a)=(3,6k), (5,2k)$ or $(7,k)$.
Since $a$ is odd,  we have $(a,b)=(1,15)$ in this case. 

Finally, we assume that $u=3$. 
Since $a\cdot 1^2+(8k-a)\cdot 1^2 =8k$, there are positive odd integers $x_3,y_3$ such that 
$ax_3^2+(8k-a)y_3^2=32k$. 
Hence we have
\begin{equation} \label{2.2}
y_3^2=1 \quad \text{and} \quad ax_3^2=a+24k.
\end{equation}
 Note that if $x_3=3$, then $(a,b)=(3,5)$ and if $x_3=5$, then $(a,b)=(1,7)$. 
Assume that $x_3 \ge 7$, that is, $2a \le k$. 
Since $a\cdot 3^2+(8k-a)\cdot 1^2=8k+8a$, there are two odd integers $x_4,y_4$ such that 
$ax_4^2+(8k-a)y_4^2=32k+32a$. 
If $y_4^2 \ge 9$, then $a+72k-9a \le 32k+32a$, 
which is a contradiction to the assumption that $2a \le k$. 
Hence we have 
\begin{equation} \label{2.3}
y_4^2=1\quad \text{and} \quad ax_4^2=33a+24k.
\end{equation}
Now, by Equations \eqref{2.2} and \eqref{2.3}, we have $x_4^2-x_3^2=32$. 
Therefore, $x_3^2=49$, $x_4^2=81$, and $k=2a$.  
which is a contradiction to the assumption that $k$ is odd. 

To prove the converse, 
we define three maps 
$$
\chi_1 : \widetilde{R}_{(1,1)}(3x^2+5y^2,m) \to \widetilde{R}_{(1,1)}
(3x^2+5y^2,4m)
\ \  
\text{by}  \ \ \chi_1(x,y)=\displaystyle\left(\frac {x-5y}2,\frac{3x+y}2\right),
$$
$$
\chi_2 : \widetilde{R}_{(1,1)}(x^2+7y^2,m) \to 
\widetilde{R}_{(1,1)}(x^2+7y^2,4m)
\ \  \text{by} \ \ \chi_2(x,y)=\displaystyle\left(\frac {3x-7y}2,
\frac{x+3y}2\right),
$$ and
$$
\chi_3 : \widetilde{R}_{(1,1)}(x^2+15y^2,m) \to 
\widetilde{R}_{(1,1)}(x^2+15y^2,4m)
\ \
 \text{by}  \ \ \chi_3(x,y)=\displaystyle\left(\frac {x+15y}2,\frac{-x+y}2\right).
$$
One may easily show that the above three maps are all bijective. 
\end{proof}

%%%%%%%%%%%%%%%%%%%%%%%%%%%%%%%%%%%%%%%%%%%%%%%%%%%%%%%%%%%%%%%%%%%%%%%%%%%%%%

\begin{thm} \label{thm1} Let $a,b,c$ be positive integers such that $(a,b,c)\ne(1,1,1)$ and $\gcd(a,b,c)=1$.  Assume that two 
of three fractions $\frac{b}{a}, \frac{c}{b}, \frac{c}{a}$ are contained in $\left\{ 1,\frac{5}{3},7,15 \right\}$. 
Then, for any positive integer $n$, we have  
$$
2t(a,b,c;n)=r(ax^2+by^2+cz^2,4(8n+a+b+c))-r(ax^2+by^2+cz^2,8n+a+b+c).
$$
\end{thm}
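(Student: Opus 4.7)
Write $f(x,y,z)=ax^2+by^2+cz^2$ and $M=8n+a+b+c$, so that $t(a,b,c;n)=r_{(1,1,1)}(f,M)$. Each element of $\{1,5/3,7,15\}$ is a ratio of two odd integers in lowest terms, hence the hypothesis forces $a,b,c$ to share a common parity, and the coprimality $\gcd(a,b,c)=1$ then forces all three to be odd. The doubling map $(x,y,z)\mapsto(2x,2y,2z)$ is a bijection from $R(f,M)$ onto $R_{(0,0,0)}(f,4M)$, which gives $r_{(0,0,0)}(f,4M)=r(f,M)$ and reduces the claim to
$$
\sum_{\mathbf d\ne(0,0,0)} r_\mathbf d(f,4M) \;=\; 2\,r_{(1,1,1)}(f,M).
$$

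A short case check on the hypothesis shows that two of the three pair-sums $a+b$, $a+c$, $b+c$ are divisible by $8$ (namely those coming from a ratio in $\{5/3,7,15\}$), and these two pair-sums share a common variable. After permuting coordinates I may assume this variable is $c$, so that $a+c\equiv b+c\equiv 0\pmod 8$ and consequently $a+b\equiv -2c\equiv 2\pmod 4$. For any $(x,y,z)\in R(f,4M)$, the congruence $ad_1+bd_2+cd_3\equiv 0\pmod 4$ (where $d_i\in\{0,1\}$ is the parity of the $i$-th coordinate) together with $a,b,c$ all odd leaves only the parity vectors $(0,0,0)$, $(1,0,1)$, $(0,1,1)$, so the identity collapses further to
$$
r_{(1,0,1)}(f,4M)+r_{(0,1,1)}(f,4M)=2\,r_{(1,1,1)}(f,M).
$$

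I would then prove $r_{(1,0,1)}(f,4M)=r_{(1,1,1)}(f,M)$ by slicing on the middle coordinate:
$$
r_{(1,0,1)}(f,4M)=\sum_{y\text{ even}} r_{(1,1)}(ax^2+cz^2,\,4M-by^2).
$$
When $y\equiv 0\pmod 4$ the argument satisfies $4M-by^2\equiv 4M\equiv 4(a+b+c)\equiv 4\pmod 8$ (since $a+b+c$ is odd), while any value of $ax^2+cz^2$ with $x,z$ odd is $\equiv a+c\equiv 0\pmod 8$, so those terms vanish. When $y\equiv 2\pmod 4$, writing $y=2y'$ with $y'$ odd turns the summand into $r_{(1,1)}(ax^2+cz^2,\,4(M-by'^2))$, and here $M-by'^2$ is divisible by $8$ because $a+c\equiv 0\pmod 8$ and $b(1-y'^2)\equiv 0\pmod 8$ for $y'$ odd. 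Lemma~\ref{lem2} applied to the binary form $ax^2+cz^2$, which is a scalar multiple of one of the primitive pairs $(3,5),(1,7),(1,15)$, then replaces each surviving summand by $r_{(1,1)}(ax^2+cz^2,\,M-by'^2)$, so summing over $y'$ odd yields $r_{(1,0,1)}(f,4M)=r_{(1,1,1)}(f,M)$. The identical argument using the good pair $(b,c)$ and slicing by $x$ gives the companion identity $r_{(0,1,1)}(f,4M)=r_{(1,1,1)}(f,M)$, and adding the two finishes the proof.

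The main point requiring care is that Lemma~\ref{lem2} is stated only for the primitive pairs $(3,5)$, $(1,7)$, $(1,15)$, whereas $(a,c)$ may be a nontrivial scalar multiple $k(\alpha,\beta)$ of one of these. This is harmless: $ax^2+cz^2$ is always a multiple of $k$, so both sides of $r_{(1,1)}(ax^2+cz^2,m)=r_{(1,1)}(ax^2+cz^2,4m)$ vanish trivially whenever $k\nmid m$; and when $k\mid m$, the hypothesis $8k\mid m$ needed to apply Lemma~\ref{lem2} to $\alpha x^2+\beta z^2$ at parameter $m/k$ follows from $8\mid m$ together with $\gcd(k,8)=1$ (since $k$ is odd). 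This dichotomy lets a single argument cover every triple satisfying the hypothesis, including scaled examples such as $(3,5,35)$ and $(9,15,25)$ in addition to the unscaled cases $(1,1,7)$, $(1,1,15)$, $(3,3,5)$.
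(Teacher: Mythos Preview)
Your proof is correct and follows essentially the same route as the paper: after the parity reduction you arrive at exactly the paper's sets $A=r_{(1,0,1)}(f,4M)$ and $B=r_{(0,1,1)}(f,4M)$, and you identify each with $r_{(1,1,1)}(f,M)$ by slicing on the even coordinate and invoking Lemma~\ref{lem2}, just as the paper does. Your explicit justification that Lemma~\ref{lem2} still applies when $(a,c)$ is a nontrivial odd scalar multiple of one of the primitive pairs is a point the paper passes over silently, so your version is in fact slightly more complete.
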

\begin{proof} Note that all of $a,b$ and $c$ are odd. Furthermore,
from the assumption, one may easily show that 
$$
-a \equiv b \equiv c \ (\text{mod} \ 8), \quad a \equiv -b \equiv c \ (\text{mod} \ 8) \quad \text{or} \quad a \equiv b \equiv -c \ (\text{mod} \ 8).
$$  
By switching the roles of $a, b$ and $c$ if necessary,  we may assume $a \equiv b \equiv -c \ (\text{mod} \ 8) $. Then we have
$$
\left(\frac a{(a,c)},\frac c{(a,c)}\right), \left(\frac b{(b,c)},\frac c{(b,c)}\right) \in \{ (3,5), (5,3), (1,7), (7,1),(1,15), (15,1)\}.
$$

Let 
$$
f=f(x,y,z)=ax^2+by^2+cz^2 \quad \text{and} \quad N=8n+a+b+c.
$$ 
One may easily show that if $f(x,y,z)=4N$, then 
$$
\left( ax^2, by^2, cz^2 \right) \equiv (0,0,4), (0,4,0), (a,4,c), (4,0,0), (4,b,c), \ \ \text{or} \ \ (4,4,4) \ (\text{mod} \ 8).
$$
Let 
\begin{eqnarray*}
&&A=\left\{ (x,y,z) \in R(f,4N) : y \equiv 2 \ (\text{mod} \ 4), xz \equiv 1 \ (\text{mod} \ 2) 
\right\} , \\
&&B=\left\{ (x,y,z) \in R(f,4N) : x \equiv 2 \ (\text{mod} \ 4), yz \equiv 1 \ (\text{mod} \ 2) 
\right\} .
\end{eqnarray*}
Note that
$$
r(f,4N)-r(f,N)=\vert A\vert +\vert B\vert.
$$
Thus it is sufficient to show $t(a,b,c;n)=\vert A\vert$ and $t(a,b,c;n)=\vert B\vert$.
To show the first equality, we apply  Lemma \ref{lem2} to show that
$$
r_{(1,1,1)}(f,N)=\displaystyle \sum_{y\in \z}r_{(1,1)}(ax^2+cz^2,N-by^2)=\displaystyle \sum_{y\in \z}r_{(1,1)}(ax^2+cz^2,4(N-by^2))=\vert A\vert .
$$
The proof of $t(a,b,c;n)=\vert B\vert$ is quite similar to this. This completes the proof.
\end{proof}

%%%%%%%%%%%%%%%%%%%%%%%%%%%%%%%%%%%%%%%%%%%%%%%%%%%%%%%%%%%%%%%%%%%%%%%%%%%%%%

\begin{rmk}
All triples $(a,b,c)$ satisfying the assumption of Theorem \ref{thm1} are listed in Table 1 below. The triples marked with asterisk are exactly those that are listed in Conjecture 6.1 of \cite{s}. 

%%%%%%%%%%%%%%%%%%%%%%%%%%%%%%%%%%%%%%%%%%%%%%%%%%%%%%%%%%%%%%%%%%%%%%%%%%%%%%
\begin{table}[ht]
\begin{tabular}{|l|}
\hline \rule[-2mm]{-2.5mm}{7mm} 
 \quad  $(1,1,7)^*, \  (1,1,15)^*, \ (3,3,5), \ (1,7,7)^*,  \ (3,5,5),  \  (1,7,15)^*, \  (1,9,15)^*$  \\  \hline

\rule[-2mm]{-2.5mm}{7mm} 
 \quad $(1,15,15)^*, \ (3,5,21), \  (1,7,49), \ (1,15,25)^*,\ (3,5,35), \ (3,5,45), \ (1,7,105)$ \\  \hline

\rule[-2mm]{-2.5mm}{7mm} 
 \quad $(3,5,75), \ (1,15,105),\ (3,21,35), \ (1,15,225), \ (9,15,25), \ (5,21,35), \ (7,15,105)$ \\  \hline

\end{tabular}
\vskip 1mm \caption{}
\end{table}
%%%%%%%%%%%%%%%%%%%%%%%%%%%%%%%%%%%%%%%%%%%%%%%%%%%%%%%%%%%%%%%%%%%%%%%%%%%%%%
\end{rmk}

\begin{thm} \label{thm2} Let $a,b$ be relatively prime positive odd integers such that one of four fractions $\frac{b}{a}, \frac{a}{b},\frac{3a}{b},\frac{b}{3a}$ is contained in $\{ \frac5 3, 7, 15 \}$. 
 Then, for any positive integer $n$, we have 
$$
2t(a,3a,b;n)=3r(\langle a,3a,b\rangle,8n+4a+b)-r(\langle a,3a,b\rangle,4(8n+4a+b)).
$$
\end{thm}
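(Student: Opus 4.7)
The plan is to adapt the parity decomposition used in the proof of Theorem~\ref{thm1}. Write $f=\langle a,3a,b\rangle$ and $N=8n+4a+b$, so that $t:=t(a,3a,b;n)=r_{(1,1,1)}(f,N)$. The trivial scaling $(X,Y,Z)\mapsto(2X,2Y,2Z)$ embeds $R(f,N)$ into $R(f,4N)$ as exactly the representations of $4N$ with all three coordinates even, so $r(f,4N)-r(f,N)$ equals the number of $(x,y,z)\in R(f,4N)$ with at least one odd coordinate. Letting $t_i$ denote the number of representations of $N$ by $f$ whose only odd coordinate is the $i$-th, so that $r(f,N)=t+t_1+t_2+t_3$, the theorem will follow once I show
$$
r(f,4N)-r(f,N)=2(t_1+t_2+t_3)=2\bigl(r(f,N)-t\bigr),
$$
since this rearranges to $r(f,4N)=3r(f,N)-2t$.

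The first step is a parity analysis of $R(f,4N)$. Since $4N\equiv 4\Mod{8}$, checking cases mod $8$ rules out triples with exactly one or exactly three odd coordinates, so only the classes $A$ ($x,y$ odd), $B$ ($x,z$ odd), $C$ ($y,z$ odd) contribute to $r(f,4N)-r(f,N)$. A further mod-$4$ check forces $z\equiv 0\Mod{4}$ in Case $A$, and shows that exactly one of $B,C$ is nonempty according to $a+b\Mod{8}$, with the even coordinate necessarily $\equiv 2\Mod{4}$. Under the theorem's hypothesis, either $b/a$ or $a/b$ lies in $\{5/3,7,15\}$, forcing $(a,b)\in\{(3,5),(5,3),(1,7),(7,1),(1,15),(15,1)\}$, so $a+b\equiv 0\Mod{8}$ and Case $B$ is realized (Case I); or $3a/b$ or $b/(3a)$ does, forcing $3a+b\equiv 0\Mod{8}$ so that Case $C$ is realized, with $(3a/d,b/d)$ for $d=\gcd(3a,b)\in\{1,3\}$ landing in the same list of coprime pairs (Case II).

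Three bijective identities then finish the numerics. For $|A|$, substituting $z=4z'$ gives $a(x^2+3y^2)=4(N-4bz'^2)$ with $x,y$ odd; after dividing through by the odd integer $a$ the right side is $\equiv 4\Mod{8}$, so Lemma~\ref{lem1}(iii) supplies a factor of $2$ and identifies $|A|$ with $2r(\langle a,3a,4b\rangle,N)$. Reading $z=2z'$ backwards shows this equals $2(t_1+t_2)$, because triples in $R(\langle a,3a,4b\rangle,N)$ correspond to representations of $N$ by $f$ with $z$ even, which (for odd $N$) have exactly one of $x,y$ odd. The same Lemma~\ref{lem1}(iii) step applied to the inner pair of $t=r_{(1,1,1)}(f,N)$ (fix $z$ odd and use $N-bz^2\equiv 4a\Mod{8}$) yields $t=2t_3$. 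For $|B|+|C|$, in whichever case is realized I write the unique even coordinate as twice an odd integer to reduce the equation to $a'u^2+b'v^2=4K$ in two odd variables with $K\equiv 0\Mod{8}$, where $(a',b')=(a,b)$ in Case I and $(a',b')=(3a,b)$ in Case II; Lemma~\ref{lem2}, applied to the appropriate coprime pair, yields $r_{(1,1)}(a'u^2+b'v^2,4K)=r_{(1,1)}(a'u^2+b'v^2,K)$, and summing over the outer odd variable recovers $r_{(1,1,1)}(f,N)=t$.

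The only technical subtlety is Case II when $d=\gcd(3a,b)=3$: Lemma~\ref{lem2} must then be routed through the coprime pair $(a,b/3)$ via the identity $r_{(1,1)}(3ay^2+bz^2,M)=r_{(1,1)}(ay^2+(b/3)z^2,M/3)$, which is valid only when $3\mid M$. When $3\mid K$, the assumed $K\equiv 0\Mod{8}$ upgrades to $K\equiv 0\Mod{24}$, so $K/3\equiv 0\Mod{8}$ and Lemma~\ref{lem2} applies directly to $(a,b/3)$; when $3\nmid K$, both $r_{(1,1)}(3ay^2+bz^2,4K)$ and $r_{(1,1)}(3ay^2+bz^2,K)$ vanish because the form is divisible by $3$ while neither $K$ nor $4K$ is, so the required equality is trivial. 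Combining everything, $r(f,4N)-r(f,N)=|A|+|B|+|C|=2(t_1+t_2)+t=2(t_1+t_2+t_3)$, which is precisely the identity to be proved.
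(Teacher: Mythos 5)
Your proof is correct and follows essentially the same route as the paper's: you decompose $R(f,4N)$ into parity classes mod $2$ (refined mod $4$), identify the all-even class with $R(f,N)$, and use Lemma~\ref{lem1}(iii) on the $x^2+3y^2$ block and Lemma~\ref{lem2} on the $a'x^2+b'y^2$ block to match the two surviving mixed-parity classes with $2(t_1+t_2)$ and $t=2t_3$, respectively. The only difference is organizational: the paper carries out the computation for the representative case $(1,3,5)$ and asserts the rest is similar, whereas you treat the two congruence cases (and the $\gcd(3a,b)=3$ wrinkle) uniformly.
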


\begin{proof}
Since all the other cases can be treated in a similar manner, we only consider the case when 
$\frac{b}{3a}=\frac{5}{3}$, that is, $(a,3a,b)=(1,3,5)$.
One may easily show that if $x^2+3y^2+5z^2=4(8n+9)$, then 
\[
\left( x^2,3y^2,5z^2 \right) \equiv (0,0,4), (1,3,0), (4,0,0), (4,3,5), \  \ \text{or} \ \ (4,4,4) \ (\text{mod} \ 8).
\]
Let 
$$
f=f(x,y,z)=x^2+3y^2+5z^2 \quad \text{and} \quad N=8n+9.
$$ 
From the above observation, we have
$$
\begin{array} {rl}
3r(f,N)-r(f,4N)\!\!\!&=3r_{(0,0,0)}(f,4N)-r(f,4N)\\[0.5em]
&=2r_{(0,0,0)}(f,4N)-r_{(1,1,0)}(f,4N)-r_{(0,1,1)}(f,4N).
\end{array}
$$
Therefore, it suffices to show that 
$$
2r_{(1,1,1)}(f,N)=2r_{(0,0,0)}(f,4N)-r_{(1,1,0)}(f,4N)-r_{(0,1,1)}(f,4N).
$$
Since $r_{(0,0,0)}(f,4N)=r(f,N)$ and 
$$
r(f,N)=r_{(1,1,1)}(f,N)+r_{(1,0,0)}(f,N)+r_{(0,0,1)}(f,N),
$$
it is enough to show that
$$
r_{(1,0,0)}(f,N)=\displaystyle \frac12 r_{(1,1,0)}(f,4N) \ \ \text{and} \ \  r_{(0,0,1)}(f,N)=\displaystyle \frac12r_{(0,1,1)}(f,4N).
$$
To prove the first assertion, we apply (i) of Lemma \ref{lem1} to show that
$$
\begin{array} {rl}
r_{(1,0,0)}(f,N)\!\!\!&=\displaystyle \sum_{z\in \z}r_{(1,0)}(x^2+3y^2,N-5z^2)\\[0.5em]
&=\displaystyle \frac12\sum_{z\in \z}r_{(1,1)}(x^2+3y^2,4(N-5z^2))=\displaystyle \frac12 r_{(1,1,0)}(f,4N).
\end{array}
$$
For the second assertion, we apply (iii) of Lemma \ref{lem1} and Lemma \ref{lem2} to show that
$$ 
\begin{array} {rl}
r_{(0,0,1)}(f,N)\!\!\!&=\displaystyle \sum_{z\in \z}r_{(0,0)}(x^2+3y^2,N-5z^2)\\[0.5em]
&=\displaystyle \frac12\sum_{z\in \z}r_{(1,1)}(x^2+3y^2,N-5z^2)=\displaystyle \frac12 r_{(1,1,1)}(x^2+3y^2+5z^2,N)\\[0.5em]
&=\displaystyle \frac12\sum_{x\in \z}r_{(1,1)}(3y^2+5z^2,N-x^2)=\displaystyle \frac12\sum_{x\in \z}r_{(1,1)}(3y^2+5z^2,4(N-x^2))\\[0.5em]
&=\displaystyle \frac12 r_{(0,1,1)}(f,4N).\\[0.5em]
\end{array}
$$
This completes the proof.
\end{proof}

%%%%%%%%%%%%%%%%%%%%%%%%%%%%%%%%%%%%%%%%%%%%%%%%%%%%%%%%%%%%%%%%%%%%%%%%%%%%

\begin{rmk}
All triples $(a,3a,b)$ satisfying the assumption of the Theorem \ref{thm2} are listed in Table 2 below. Those triples marked with asterisk are exactly those that are listed in Conjecture 6.2 of \cite{s}. 

%%%%%%%%%%%%%%%%%%%%%%%%%%%%%%%%%%%%%%%%%%%%%%%%%%%%%%%%%%%%%%%%%%%%%%%%%%%%
\begin{table}[ht]
\begin{tabular}{|l|}
\hline \rule[-2mm]{-2.5mm}{7mm} 
 \quad  $(1,3,5)^*,\  (1,3,7)^*,\ (1,3,15)^*, \ (1,3,21)^*,\ (1,5,15)^*,\ (1,3,45) $ \\  \hline

\rule[-2mm]{-2.5mm}{7mm} 
 \quad $(3,5,9)^*,\  (1,7,21)^*, \  (3,5,15)^*, \ (3,7,21)^*, \ (1,15,45), \ (5,9,15)  $ \\  \hline
\end{tabular}
\vskip 1mm \caption{}
\end{table}
%%%%%%%%%%%%%%%%%%%%%%%%%%%%%%%%%%%%%%%%%%%%%%%%%%%%%%%%%%%%%%%%%%%%%%%%%%%%%%
\end{rmk}

\begin{thm} \label{thm3}
Let $(a,b,c)\in \{ (1,2,15), (1,15,18), (1,15,30)\}$. For any positive even integer $n$, we have
\begin{equation} \label{1215}
2t(a,b,c;n)=r(\langle a,b,c\rangle,4(8n+a+b+c))-r(\langle a,b,c\rangle,8n+a+b+c).
\end{equation}
\end{thm}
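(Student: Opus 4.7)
The plan is to adapt the parity-class strategy of Theorems \ref{thm1} and \ref{thm2} to the present situation, where exactly one of the three coefficients is even. After relabeling if necessary so that the even coefficient sits in the third position, the three triples become $(1,15,2)$, $(1,15,18)$, $(1,15,30)$; set $f = \langle a,b,c\rangle$ and $N = 8n+a+b+c$. A routine mod-$8$ analysis of $ax^2+by^2+cz^2 \equiv 4N \pmod 8$ across all eight parity classes $(d_1,d_2,d_3) \in (\z/2\z)^3$ shows that only the all-even class $(0,0,0)$ and the mixed class $(1,1,0)$ can contribute to $R(f,4N)$; moreover the substitution $(x,y,z)\mapsto(x/2,y/2,z/2)$ identifies $R_{(0,0,0)}(f,4N)$ with $R(f,N)$. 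Consequently the identity \eqref{1215} is equivalent to
$$r_{(1,1,0)}(f,4N) = 2\, r_{(1,1,1)}(f,N) = 2\, t(a,b,c;n).$$

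To prove this reduced identity, split $R_{(1,1,0)}(f,4N)$ by the residue of the third coordinate modulo $4$. In subcase~A ($z \equiv 2 \pmod 4$), write $z = 2w$ with $w$ odd; the equation becomes $ax^2 + by^2 = 4(N - cw^2)$. Since $a+b = 16$ in each of the three triples and $w^2 \equiv 1 \pmod 8$, one has $N - cw^2 \equiv (a+b+c) - c = a+b \equiv 0 \pmod 8$, so Lemma \ref{lem2} applied to $(a,b) = (1,15)$ yields
$$r_{(1,1)}(x^2+15y^2,\, 4(N-cw^2)) = r_{(1,1)}(x^2+15y^2,\, N-cw^2).$$
Summing over odd $w$ shows that subcase~A contributes exactly $r_{(1,1,1)}(f,N) = t(a,b,c;n)$.

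In subcase~B ($z \equiv 0 \pmod 4$), write $z = 4z'$; now the equation reads $ax^2 + by^2 = 4(N - 4cz'^2)$, whose inner level satisfies $N - 4cz'^2 \equiv S \pmod 8$, and $S \in \{18,34,46\}$ is not divisible by $8$, so Lemma \ref{lem2} is not directly available. Here I would invoke Lemma \ref{even} to rewrite $t(a,b,c;n) = r(g,N)$ for the auxiliary form $g(x,y,z) = f(x,y,y-2z)$ and construct an explicit bijection between $R(g,N)$ and $\{(x,y,z) \in R_{(1,1,0)}(f,4N) : z \equiv 0 \pmod 4\}$, obtained by composing a $\chi_3$-type transformation on the $(x,y)$-pair (in the spirit of the bijections appearing in the proof of Lemma \ref{lem2}) with a suitable $2$-adic substitution on the third coordinate. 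The hypothesis that $n$ is even enters at the mod-$16$ level to guarantee the divisibilities needed for this map to be well-defined and surjective onto subcase~B. Verifying bijectivity uniformly across the three triples, together with the accompanying $2$-adic bookkeeping, is the main technical obstacle; once completed, subcase~B also contributes $t(a,b,c;n)$, and combined with subcase~A this gives the desired identity.
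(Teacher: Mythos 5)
Your reduction is correct and coincides with the paper's: after placing the even coefficient last, the mod-$8$ analysis shows only the classes $(0,0,0)$ and $(1,1,0)$ occur in $R(f,4N)$, so \eqref{1215} reduces to $r_{(1,1,0)}(f,4N)=2t(a,b,c;n)$, and your subcase~A ($z\equiv 2 \Mod 4$) is exactly the paper's first half: Lemma~\ref{lem2} for the pair $(1,15)$ identifies that contribution with $r_{(1,1,1)}(f,N)=t(a,b,c;n)$. Up to this point the argument is sound.

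The gap is subcase~B, which you defer to "an explicit bijection \dots\ obtained by composing a $\chi_3$-type transformation with a suitable $2$-adic substitution" and describe as bookkeeping. This is in fact the entire substance of the theorem, and the evidence from the paper is that no such single change-of-variables bijection exists. The paper converts the subcase-B count into $2r(f_1,16m+18)$ for an auxiliary ternary form $f_1$ and converts $t(a,b,c;n)$ into $r(g_1,16m+18)$ for a form $g_1$ lying in a \emph{different} genus with a different class structure; the identity $2r(f_1,16m+18)=r(g_1,16m+18)$ is then proved by computing both genera (class numbers $3$ and $2$), invoking the Minkowski--Siegel formula to relate the two genus averages, and constructing four separate explicit bijections ($\phi_1,\dots,\phi_4$) between representation sets of distinct classes to eliminate the remaining unknowns. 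Two features show your proposed shortcut cannot work as stated: the needed identity carries a factor of $2$ (so it cannot come from a global isometry or a single unimodular substitution), and the paper explicitly notes that the intermediate equality $r(f_1,16m+18)=r(f_2,16m+18)$ fails in general for odd $n$ --- the evenness of $n$ enters through a congruence on $R(f_1,\cdot)$ that feeds into the genus computation, not through well-definedness of a map on $R(g,N)$. Without supplying the genus-theoretic step (or an equivalent), subcase~B, and hence the theorem, remains unproved.
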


\begin{proof}
First, assume that $(a,b,c)=(1,2,15)$. 
Let
$$
f=f(x,y,z)=x^2+2y^2+15z^2 \quad \text{and} \quad N=8n+18.
$$
One may easily show that if $f(x,y,z)=4N$, then 
$$
\left( x^2,2y^2,15z^2 \right) \equiv (0,0,0), (1,0,7), \ \ \text{or} \ \ (4,0,4) \ (\text{mod} \ 8).
$$
Hence the right-hand side of Equation \eqref{1215} is 
$$
r(f,4N)-r(f,N)=r_{(1,0,1)}(f,4N).
$$
Note that
$$
\begin{array} {rl}
r_{(1,1,1)}(f,N)\!\!\!&=\displaystyle \sum_{y\in \z}r_{(1,1)}(x^2+15z^2,(N-2y^2)) \\[1em]
&=\displaystyle \sum_{y\in \z}r_{(1,1)}(x^2+15z^2,4(N-2y^2))
=r_{(1,1,1)}(x^2+8y^2+15z^2,4N) \\[1em]
&=\left\vert \left\{ (x,y,z) \in R(f,4N) : 
xz \equiv 1 (\text{mod} \ 2), \ y \equiv 2 (\text{mod} \ 4) \right\} \right\vert \\[1em]
\end{array}
$$
by Lemma \ref{lem2}. Since
$$
\left\vert \left\{ (x,y,z) \in R(f,4N) : 
xz \equiv 1 (\text{mod} \ 2), y \equiv 0 (\text{mod} \ 4) \right\} \right\vert =r(x^2+32y^2+15z^2,4N),
$$
it suffices to show that 
\begin{equation} \label{1215-1}
r_{(1,1,1)}(f,N)=r(x^2+32y^2+15z^2,4N).
\end{equation}
It is well known that 
$$
\gen(f_1=4x^2+4y^2+8z^2+2xy)=\{f_1,f_2,f_3\}, 
$$
where $f_2=4x^2+6y^2+6z^2+4yz+2xz+2xy$, $f_3=2x^2+6y^2+12z^2+6yz+2xz$, and
$$
\gen(g_1=4x^2+8y^2+18z^2+8yz+4xz)=\{g_1, g_2=2x^2+10y^2+24z^2\}.
$$
Note that 
$$
r_{(1,1,1)}(f,N)=r(x^2+2(x-2y)^2+15(x-2z)^2,N)=r(g_1,N).
$$
On the other hand, the right-hand side of Equation \eqref{1215-1} is
$$
\begin{array}{rl}
&r \left( x^2+15y^2+32z^2,4N\right)=r \left( (3x+y)^2+15(x+y)^2+32z^2,4N \right) \\[0.5em]
&=r \left( 12x^2+8y^2+16z^2+18xy,2N\right)\\[0.5em]
&=r \left( 48x^2+8y^2+16z^2+36xy,2N \right)
+r \left( 12x^2+32y^2+16z^2+36xy,2N \right) \\[0.5em]
&=2r \left(f_1,N \right) .
\end{array}
$$
Therefore, it suffices to show that for any positive even integer $n=2m$,
\begin{equation} \label{1215-2}
2r(f_1,16m+18)=r(g_1,16m+18).
\end{equation}
 By the Minkowski-Siegel formula, we have
\[
r(f_1,16m+18)+2r(f_2,16m+18)+r(f_3,16m+18)=r(g_1,16m+18)+r(g_2,16m+18).
\]
If $f_1(x,y,z)=16m+18$, then  
 one may easily check that $x+3y-4z \equiv 0 \Mod 8$, 
 and  if $f_2(x,y,z)=16m+18$, then $x-6y+2z \equiv 0 \Mod 8$. 
If we define a map 
$$
\begin{array}{rl}
\phi_1:\!\!\!&\left\{(x,y,z) \in R(f_1,16m+18) :
x+3y-4z \equiv 0 \text{ (mod 16)} \right\} \\[0.5em]
&\hskip 2pc \to \left\{(x,y,z) \in R(f_2,16m+18) :
x-6y+2z \equiv 0 \text{ (mod 16)}\right\}
\end{array}
$$
by $\phi_1(x,y,z)=\displaystyle\left(\frac{12x+4y+16z}{16},\frac{-11x-y+12z}{16},
\frac{x-13y-4z}{16} \right)$, then it is a bijective map. Furthermore,  the map 
$$
\begin{array}{rl}
\phi_2:\!\!\!&\left\{(x,y,z) \in R(f_1,16m+18) :  
x+3y-4z \equiv 8 \text{ (mod 16)} \right\} \\[0.5em]
&\hskip 2pc \to \left\{(x,y,z) \in R(f_2,16m+18) : 
x-6y+2z \equiv 8 \text{ (mod 16)} \right\}
\end{array}
$$
defined by $\phi_2(x,y,z)=\displaystyle\left(\frac{4x+12y-16z}{16},\frac{-13x+y+4z}{16},\frac{-
x-11y-12z}{16} \right)$ is also bijective. Therefore,  we have
\begin{equation} \label{1215-3}
r(f_1,16m+18)=r(f_2,16m+18). 
\end{equation}
Note that the above equation does not hold, in general, if $n$ is odd. If we define two maps 
$$
\phi_3 : R(\langle 8,10,24\rangle,16m+18) \to
R(f_1,16m+18) \ \  \text{by}  \ \  \phi_3(x,y,z)=(y+2z,y-2z,x)
$$ 
and 
$$
\phi_4 :
R(\langle 2,24,40\rangle,16m+18) \to R(f_3,16m+18) \  \  
\text{by}   \  \ \phi_4(x,y,z)=(x+z,2y+z,-2z),
$$ 
then one may easily check that both of them are bijective. Hence we have
$$
\begin{array}{rl}
r(g_2,16m+18)\!\!&\!\!\!=r \left(\langle 8,10,24\rangle,16m+18 \right)
+r \left(\langle 2,24,40\rangle,16m+18 \right) \\
&\!\!\!=r(f_1,16m+18)+r(f_3,16m+18),
\end{array}
$$
for any non negative integer $m$. Therefore, from the Minkowski-Siegel formula given above, we have $2r(f_2,16m+18)=r(g_1,16m+18)$ for any non negative integer $m$. Equation \eqref{1215-2} follows directly from this and Equation \eqref{1215-3}. 

For the other two cases,  one may easily show Equation \eqref{1215} by replacing  $N,f_i,g_i$ and $\phi_i$ with the following data:

\noindent {\bf (1) $(a,b,c)=(1,15,18)$.} In this case, we let $N=8n+34$ and 
$$
\begin{array}{l}
f_1=4x^2+4y^2+72z^2+2xy, \\
f_2=4x^2+16y^2+22z^2+14yz-2xz+4xy, \\
f_3=6x^2+16y^2+16z^2-8yz+6xz+6xy, \\
\end{array}
$$
and
$$
g_1=4x^2+34y^2+34z^2+8yz+4xz+4xy, \quad  g_2=10x^2+18y^2+24z^2.
$$
Define 
$$
\begin{array}{rl}
\phi_1 : &\left\{ (x,y,z)\in R(f_1,16m+34) : 3x+y+4z \equiv 0 \ (\text{mod } 16)\right\} \\ [0.5em]
&\hskip 2pc \to \left\{ (x,y,z) \in R(f_2,16m+34) : 3x-y+2z \equiv 0 \ (\text{mod } 16)\right\}
\end{array}
$$
by
$$
\phi_1(x,y,z)=\left(\displaystyle \frac{x-5y-68z}{16},\frac{-5x-7y+20z}{16},\frac{-4x+4y-16z}{16} \right),
$$
$$
\begin{array}{rl}
\phi_2 : &\left\{ (x,y,z)\in R(f_1,16m+34) : 3x+y+4z \equiv 8 \ (\text{mod } 16)\right\} \\ [0.5em]
&\hskip 2pc  \to \left\{ (x,y,z) \in R(f_2,16m+34) : 3x-y+2z \equiv 8 \ (\text{mod } 16)\right\}
\end{array}
$$
by
$$
\phi_2(x,y,z)=\left(\frac{9x-5y-52z}{16},\frac{3x+9y+4z}{16},\frac{4x-4y+16z}{16} \right),
$$
and
$$
\begin{array} {rl} 
&\phi_3 : R(10x^2+24y^2+72z^2,16m+34) \\ [0.5em] 
&\hskip 6pc \to R(f_1,16m+34) \ \ 
\text{by} \ \ 
\phi_3(x,y,z)=(x-2y,x+2y,z),
\end{array}
$$
$$
\begin{array} {rl} 
&\phi_4 : R(18x^2+24y^2+40z^2,16m+34)\\ [0.5em] 
&\hskip 4pc  \to R(f_3,16m+34)
 \ \ \text{by} \ \ 
\phi_4(x,y,z)=(x+2y,-x+z,-x-z).
\end{array} 
$$

\noindent {\bf (2) $(a,b,c)=(1,15,30)$.}   In this case, we let $N=8n+46$ and 
$$
\begin{array}{l}
f_1=4x^2+4y^2+120z^2+2xy, \\
f_2=4x^2+16y^2+34z^2+14yz-2xz+4xy, \\
f_3=10x^2+16y^2+16z^2+8yz+10xz+10xy, \\
\end{array}
$$
and
$$
g_1=4x^2+46y^2+46z^2+32yz+4xz+4xy, \quad g_2=6x^2+30y^2+40z^2.
$$
Define 
$$
\begin{array}{rl}
\phi_1 : &\left\{ (x,y,z)\in R(f_1,16m+46) : 3x-y-4z \equiv 0 \ (\text{mod } 16)\right\} \\ [0.5em]
&\hskip 2pc \to \left\{ (x,y,z) \in R(f_2,16m+46) : 3x-y+2z \equiv 8 \ (\text{mod } 16)\right\}
\end{array}
$$
by
$$
\phi_1(x,y,z)=\left(\displaystyle \frac{7x-13y-4z}{16},\frac{-3x+y-44z}{16},\frac{-4x-4y+16z}{16} \right),
$$
$$
\begin{array}{rl}
\phi_2 : &\left\{ (x,y,z)\in R(f_1,16m+46) : 3x-y-4z \equiv 8\  (\text{mod } 16)\right\} \\  [0.5em]
&\hskip 2pc  \to \left\{ (x,y,z) \in R(f_2,16m+46) : 3x-y+2z \equiv 0 \ (\text{mod } 16)\right\}
\end{array}
$$
by
$$
\phi_2(x,y,z)=\left(\frac{9x-11y+20z}{16},\frac{3x+7y+28z}{16},\frac{-4x-4y+16z}{16} \right),
$$
and
$$
\begin{array} {rl}
&\phi_3 : R(6x^2+40y^2+120z^2,16m+46) \\ [0.5em]
&\hskip 7pc \to R(f_1,16m+46)
 \ \ 
\text{by}
 \ \ 
\phi_3(x,y,z)=(x+2y,-x+2y,z),
\end{array}
$$
$$
\begin{array} {rl}
&\phi_4 : R(24x^2+30y^2+40z^2,16m+46)\\ [0.5em]
&\hskip 6pc \to R(f_3,16m+46)
\ \ 
\text{by} \ \ 
\phi_4(x,y,z)=(-y-2z,x+y,-x+y).
\end{array} 
$$
This completes the proof. \end{proof}

%%%%%%%%%%%%%%%%%%%%%%%%%%%%%%%%%%%%%%%%%%%%%%%%%%%%%%%%%%%%%%%%%%%%%%%%%%%%%

\begin{thm} \label{thm4}
For any positive integer $n$ such that $n \not\equiv 1 \Mod{3}$, we   have
\begin{equation} \label{1127}
2t(1,1,27;n)=r(x^2+y^2+27z^2,4(8n+29))-r(x^2+y^2+27z^2,8n+29).
\end{equation}
\end{thm}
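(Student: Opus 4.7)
Set $f(x,y,z) = x^2 + y^2 + 27z^2$ and $N = 8n+29$. The natural first step is a congruence analysis modulo $8$. Since $N \equiv 5 \pmod 8$ and $27 \equiv 3 \pmod 8$, one checks that any $(x,y,z) \in R(f, 4N)$ either satisfies $2 \mid x$, $2 \mid y$, $2 \mid z$, in which case $(x/2, y/2, z/2) \in R(f, N)$, or has $z$ odd together with exactly one of $x, y$ odd (the other being $\equiv 0 \pmod 4$). By the $x \leftrightarrow y$ symmetry of $f$,
$$r(f, 4N) - r(f, N) = 2\,|A|, \quad A := \{(x,y,z) \in R(f, 4N) : x, z \text{ odd}, \ y \equiv 0 \pmod 4\},$$
so it suffices to prove $t(1,1,27;n) = |A|$.

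By Lemma \ref{odd}, $t(1,1,27;n) = r(\tilde g, N)$ with $\tilde g(x,y,z) = f(x, x-2y, x-2z) = 29x^2+4y^2+108z^2-4xy-108xz$, while parameterizing elements of $A$ by writing $x = 2a+1$, $y = 4w$, $z = 2b+1$ and dividing $f(2a+1, 4w, 2b+1) = 4N$ through by $4$ realizes $|A|$ as $r(\tilde h, N)$ for an explicit ternary form $\tilde h$. The content of the theorem is thus the equality $r(\tilde g, N) = r(\tilde h, N)$ for $N = 8n+29$ with $n \not\equiv 1 \pmod 3$.

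Following the blueprint of Theorem \ref{thm3}, I would next compute the genera of $\tilde g$ and $\tilde h$, apply the Minkowski-Siegel mass formula to obtain a weighted linear relation among the representation numbers of the forms in each genus, and then construct explicit unimodular substitutions that match up the representation sets of the remaining auxiliary forms in the two genera. The restriction $n \not\equiv 1 \pmod 3$ is expected to enter when one of these auxiliary forms represents $N$ only in the class $N \equiv 1 \pmod 3$; once that contribution is eliminated, the weighted relation should collapse to the desired equality.

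The main obstacle is the construction of these matchings. Unlike the purely $2$-adic arguments used for the cases treated in Theorem \ref{thm1} and Theorem \ref{thm2}, the presence of $3^3$ in the coefficient $27$ introduces genuine $3$-adic subtleties, and the substitutions must respect divisibility conditions at both $2$ and $3$ simultaneously. It is precisely this $3$-adic constraint that should single out the residue class of $n$ modulo $3$, and verifying the accompanying divisibility conditions (presumably modulo $16$ or $32$, as in the three cases of Theorem \ref{thm3}) will be the most delicate technical step.
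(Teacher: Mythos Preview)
Your reduction $r(f,4N)-r(f,N)=2|A|$ with $A=\{(x,y,z)\in R(f,4N): x,z\text{ odd},\ 4\mid y\}$ is correct. The next step, however, fails as written: the substitution $x=2a+1$, $y=4w$, $z=2b+1$ is affine rather than linear, and dividing $f(2a+1,4w,2b+1)=4N$ by $4$ yields
\[
a^{2}+a+4w^{2}+27b^{2}+27b+7=N,
\]
which carries linear terms and a constant and is therefore not of the shape $r(\tilde h,N)$ for any quadratic form $\tilde h$. To realize $|A|$ as a genuine representation number one needs a \emph{linear} sublattice substitution under which the parity constraints become automatic, and for $x^{2}+y^{2}+27z^{2}$ no such substitution exists using $2$-adic information alone. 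So the bridge on which your whole Minkowski--Siegel program rests is not there.

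The paper's proof takes a different and much more elementary route, using neither genus computations nor the mass formula. The hypothesis $n\not\equiv 1\pmod 3$ (equivalently $N\not\equiv 1\pmod 3$) is invoked at the very first step, not inside an auxiliary genus: since $x^{2}+y^{2}\equiv N\pmod 3$, one has the exact relation $r(f,m)=\delta_{m}\cdot|\{(x,y,z)\in R(f,m):x\equiv y\pmod 3\}|$ with $\delta_{m}\in\{1,2\}$ whenever $m\not\equiv 1\pmod 3$. Imposing $x\equiv y\pmod 3$ via the linear change $y\mapsto x-3y$ turns $f$ into $h=2x^{2}+5y^{2}+27z^{2}+2xy$; combining it with the parity condition via $y\mapsto x-6y$, $z\mapsto x-2z$ turns $r_{(1,1,1)}(f,N)$ into $\delta_{N}\cdot r(g,N)$ for an explicit form $g$. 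After the common factor $\delta_{N}$ cancels, the problem reduces to $r(g,N)=|\{(x,y,z)\in R(h,4N):x\text{ odd}\}|$, which is then proved by a single explicit bijection $\phi(x,y,z)=(x-7z,\,-x-4y+z,\,-x-z)$ together with the parity symmetry $(x,y,z)\mapsto(x+z,y,-z)$ of $g$. The idea you are missing is precisely this mod-$3$ linearization: it simultaneously explains the restriction on $n$ and supplies the sublattice substitutions that your $2$-adic parametrization cannot.
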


\begin{proof}
 Let $N=8n+29$ and 
$$
\begin{array}{l}
f=f(x,y,z)=x^2+y^2+27z^2, \\
g=g(x,y,z)=8x^2+20y^2+29z^2+4yz+8xz+8xy, \\
h=h(x,y,z)=2x^2+5y^2+27z^2+2xy.
\end{array}
$$
For any positive integer $m \not\equiv 1 \Mod 3$, we let 
$$
\delta_m=
\begin{cases}
1 & \text{if} \ \ m\equiv 0\ (\text{mod } 3), \\
2 & \text{if} \ \ m\equiv 2\ (\text{mod } 3).
\end{cases}
$$
Note that
\begin{equation} \label{1127-1}
r(f,m)=\delta_m \left\vert \left\{ (x,y,z) \in R(f,m) : x \equiv y \Mod 3 \right\} \right\vert .
\end{equation}
Since
$$
r(f,4N)=\delta_N \cdot r(x^2+(x-3y)^2+27z^2,4N)=\delta_N \cdot r(h,4N)
$$
and
$$
\begin{array}{l}
\left\vert \left\{ (x,y,z) \in R(f,4N) : y \equiv 0 \Mod 2 \right\} \right\vert \\[0.5em] 
=\delta_N \cdot r(x^2+4(x-3y)^2+27z^2,4N)=\delta_N \cdot r(8x^2+5y^2+27z^2+4xy,4N) \\[0.5em]
=\delta_N \left\vert \left\{ (x,y,z) \in R(h,4N) : x \equiv 0 \Mod 2 \right\} \right\vert,
\end{array}
$$
we have
\begin{equation} \label{1127-2} 
\vert \left\{ (x,y,z) \in R(f,4N) : \text{$y$ is odd} \right\} \vert=\delta_N\vert \left\{ (x,y,z) \in R(h,4N) :  \text{$x$ is odd}  \right\}\vert.
\end{equation}
One may easily show that if $(x,y,z) \in R(f,4N)$, then
\[
\left( x^2,y^2,27z^2 \right) \equiv (0,0,4), (0,1,3), (0,4,0), (1,0,3), (4,0,0), (4,4,4) \Mod 8.
\]
From this and Equation \eqref{1127-2}, the right hand side of Equation \eqref{1127} becomes
$$
r(f,4N)-r(f,N)=2\delta_N \left\vert \left\{ (x,y,z) \in R(h,4N) : x \equiv
1 \Mod 2 \right\} \right\vert.
$$
On the other hand, by Equation \eqref{1127-1},
$$
\begin{array}{rl}
t(1,1,27;n)\!\!\!&=r_{(1,1,1)}(f,N) \\[0.5em]
&=\delta_N \left\vert \left\{ (x,y,z) \in R(f,N) : x\equiv y \Mod 3,\ x\equiv y\equiv z \Mod 2 \right\} \right\vert \\[0.5em]
&=\delta_N \cdot r(x^2+(x-6y)^2+27(x-2z)^2,N)=\delta_N \cdot r(g,N).
\end{array}
$$
Therefore, it is enough to show that 
$$
r(g,N)=\left\vert \left\{ (x,y,z) \in R(h,4N) : \ x \equiv 
1 \ (\text{mod } 2) \right\} \right\vert .
$$

Now, we let
\begin{eqnarray*}
&&A=\left\{ (x,y,z) \in R(g,N) : x \equiv 0 \Mod 2
\right\}, \\
&&B=\left\{ (x,y,z) \in R(h,4N) : \ x \equiv 1 \ (\text{mod} \ 2), x+z \equiv 0 \Mod 8 \right\}.
\end{eqnarray*}
Note that $x+z \equiv 8 \Mod {16}$ if $(x,y,z) \in B$.
% We use this note when we show that \phi^{-1}(B) is contained in A.
Define a map $\phi : A \to B$ by
\[
\phi(x,y,z)=\left( x-7z,\ -x-4y+z,\ -x-z \right).
\]
Then, one may easily show that $\phi$ is a bijection.
Since $g(x+z,y,-z)=g(x,y,z)$ and  $z_0$ is odd for any $(x_0,y_0,z_0) \in R(g,N)$,  we have
$$
\left\vert \left\{ (x,y,z) \in R(g,N) : x \equiv 0 \Mod 2 
\right\} \right\vert 
=\left\vert \left\{ (x,y,z) \in R(g,N) : x \equiv 1 \Mod 2
\right\} \right\vert
$$
and thus
$$
r(g,N)=2\left\vert \left\{ (x,y,z) \in R(g,N) : x \equiv 0 \Mod 2
\right\} \right\vert.
$$

Now, we are ready to prove the assertion. Note that if 
$(x,y,z) \in R(h,4N)$ and $x \equiv 1 \Mod 2$, 
then $z \equiv \pm x \Mod 8 $. Therefore, we have 
$$
\begin{array}{l}
\left\vert \left\{ (x,y,z) \in R(h,4N) : \ x \equiv 1 \Mod 2 \right\} \right\vert \\[0.5em]
=2 \left\vert \left\{ (x,y,z) \in R(h,4N) : \ x \equiv 1 \Mod 2, \ x+z \equiv 0 \Mod 8 \right\} 
\right\vert \\[0.5em]
=2\vert B \vert =2\vert A\vert =r(g,N).
\end{array}
$$
\\
This completes the proof.
\end{proof}

Finally, we prove the Conjecture 6.7 in \cite{s}. 

%%%%%%%%%%%%%%%%%%%%%%%%%%%%%%%%%%%%%%%%%%%%%%%%%%%%%%%%%%%%%%%%%%%%%%%%%%%%%%
\begin{thm} \label{thm5}
For a positive integer $n$, the Diophantine equation 
$$
\mathcal T_{(1,1,6)}(x,y,z)=\displaystyle \frac{x(x-1)}{2}+\frac{y(y-1)}{2}+6\frac{z(z-1)}{2}=n
$$ 
has an integer solution if and only if $n \not\equiv 2 \cdot 3^{2r-1}-1 \Mod{3^{2r}}$ for any positive integer $r$.
\end{thm}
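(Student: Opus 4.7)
The plan is to translate the triangular equation into a representation question for $f(u,v,w) = u^2+v^2+6w^2$ and then carry out a $3$-adic analysis. Substituting $u = 2x-1,\ v = 2y-1,\ w = 2z-1$ identifies $\mathcal{T}_{(1,1,6)}(x,y,z) = n$ with $f(u,v,w) = 8M$ subject to $u,v,w$ all odd, where $M := n+1$. A mod-$8$ calculation of $u^2+v^2+6w^2 \equiv 0 \pmod 8$ shows that every integer solution of $f = 8M$ automatically has $u \equiv v \equiv w \pmod 2$, so the all-even solutions correspond bijectively (via $(u,v,w) \mapsto (u/2,v/2,w/2)$) to the solutions of $f = 2M$. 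The $S \equiv 0 \pmod 8$ case of Lemma~\ref{even} therefore specializes to
$$
t(1,1,6; n) = r(f, 8M) - r(f, 2M),
$$
and the theorem reduces to the statement that $r(f, 8M) > r(f, 2M)$ if and only if $M \neq 3^{2r-1} m$ with $\gcd(m,3) = 1$ and $m \equiv 2 \pmod 3$.

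For necessity I would prove $r(f, 8M) = 0$ by iterated $3$-adic descent when $M = 3^{2r-1} m$ with $m \equiv 2 \pmod 3$. Since $-1$ is a nonsquare mod $3$, any solution forces $u \equiv v \equiv 0 \pmod 3$; setting $u = 3u'$, $v = 3v'$ yields $3(u'^2 + v'^2) + 2w^2 = 8M/3$. When $r \ge 2$ the right side is divisible by $3$, so $3 \mid w$, and rescaling produces the same equation with $M$ replaced by $M/9$. Iterating down to $r = 1$ leaves $3(u'^2 + v'^2) + 2w^2 = 8m$; reducing mod $3$ yields $w^2 \equiv m \equiv 2 \pmod 3$, impossible.

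For sufficiency, assume $M$ is not of the exceptional form. The scaling $(u,v,w) \leftrightarrow (3u,3v,3w)$ provides, in the opposite direction, a bijection between all-odd solutions of $f = 8(M/9)$ and of $f = 8M$ whenever $9 \mid M$, so by descent I may assume $v_3(M) \in \{0,1\}$ with $M/3 \equiv 1 \pmod 3$ in the case $v_3(M) = 1$. In these base cases I would invoke the fact that $\gen(f) = \{f\}$ has class number one, so that by the Minkowski-Siegel formula $r(f, N)$ equals a product of local densities $\alpha_p$; since $8M$ and $2M$ differ by a square at every odd prime, $\alpha_p(f, 8M) = \alpha_p(f, 2M)$ for every $p \neq 2$, while a direct computation shows $\alpha_2(f, 8M) > \alpha_2(f, 2M)$. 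The main obstacle is precisely this last strict inequality at $p = 2$---producing an all-odd representation rather than merely any representation. A more hands-on alternative is to exhibit an odd $w$ with $8M - 6w^2 > 0$ a sum of two squares (which are then automatically odd, since $8M - 6w^2 \equiv 2 \pmod 4$), reducing the existence question to the classical two-squares theorem applied to the residue classes determined by the good $M$.
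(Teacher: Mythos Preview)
Your approach is essentially the paper's: translate to $f=x^2+y^2+6z^2$, use that $f$ has class number one, and compare $r(f,8M)$ with $r(f,2M)$ via the Minkowski--Siegel formula and the $2$-adic local density. The paper does not bother with your $9$-adic descent on $M$; it treats all admissible $n$ at once.

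There is, however, one genuine slip in your sufficiency argument. You write that $r(f,N)$ is a product of local densities $\alpha_p$ with $\alpha_p(f,8M)=\alpha_p(f,2M)$ for every $p\ne 2$, and conclude that it suffices to prove $\alpha_2(f,8M)>\alpha_2(f,2M)$. But the Siegel formula carries an archimedean factor proportional to $\sqrt{N}$ for a ternary form, so the correct ratio is
\[
\frac{r(f,8M)}{r(f,2M)}\;=\;2\cdot\frac{\alpha_2(f,8M)}{\alpha_2(f,2M)},
\]
exactly as the paper records. This matters, because your claimed \emph{strict} inequality at $p=2$ is false in general: writing $2M=2^{s}t$ with $t$ odd, Yang's explicit formula gives $\alpha_2(f,2M)=\alpha_2(f,8M)=2$ whenever $s$ is odd and $t\equiv 5\pmod 8$. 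For instance at $n=4$ (so $M=5$) one finds $r(f,40)=32=2\cdot 16=2\,r(f,10)$, and the $2$-adic densities coincide. What the paper actually verifies is the weaker inequality $2\alpha_2(f,8M)>\alpha_2(f,2M)$, which holds in every case and is exactly what is needed for $r(f,8M)-r(f,2M)>0$.

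Two minor points: the map $(u,v,w)\mapsto(3u,3v,3w)$ you invoke between all-odd solutions of $f=8(M/9)$ and of $f=8M$ is only an injection, not a bijection (though an injection is all your reduction requires); and your alternative route via the two-squares theorem is left too vague to stand as a proof.
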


\begin{proof}
Note that $\mathcal T_{(1,1,6)}(x,y,z)=n$ has an integer solution if and only if $f(x,y,z)=x^2+y^2+6z^2=8n+8$ has an integer 
solution $x,y,z$ such that $xyz \equiv 1 \Mod 2$. Since the ternary quadratic form $f(x,y,z)$ has class number one, it represents every integer that is locally represented (see 102.5 of \cite{OM}).  
Therefore, one may easily check that $f(x,y,z)=8n+8$ has an integer solution if and 
only if   $n \not\equiv 2 \cdot 3^{2r-1}-1 \Mod{3^{2r}}$ for any positive integer $r$.

Now,  assume that $n$ is a  positive integer such that $n \not\equiv 2 \cdot 3^{2r-1}-1 \Mod{3^{2r}}$ for any positive integer $r$. 
Note that $f(x,y,z)=8n+8$ has an integer solution $x,y,z$ such that $xyz \equiv 1 \Mod 2$ if and 
only if $r(f,8n+8)-r(f,2n+2)>0$. 
By the Minkowski-Siegel formula, we have 
$$
\frac {r(f,8n+8)}{r(f,2n+2)}=2\frac {\alpha_2(f,8n+8)}{\alpha_2(f,2n+2)},
$$
where $\alpha_2$ is the local density over $\z_2$ (for details, see, for example, \cite{Ki}). For a positive integer $s$ and a positive odd integer $t$, one may easily compute by using the result of \cite{ya} that 
$$
\alpha_2(f,2^st)=\begin{cases} 
2-3\cdot 2^{-s/2} & \text{if} \quad s \equiv 0 \Mod{2}, \\ 
2-2^{(1-s)/2} & \text{if} \quad s \equiv 1 \Mod{2}, \ t \equiv 1 \Mod{8}, \\
2 & \text{if} \quad s \equiv 1 \Mod{2}, \ t \equiv 5 \Mod{8}, \\
2-3\cdot 2^{(-s-1)/2} & \text{if} \quad s \equiv 1 \Mod{2}, \ t \equiv 3 \ \text{or} \ 7 \Mod{8}.
\end{cases} 
$$
Therefore, we have 
$2\alpha_2(f,8n+8)>\alpha_2(f,2n+2)$ for any positive integer $n$.
This completes the proof. 
\end{proof}

%%%%%%%%%%%%%%%%%%%%%%%%%%%%%%%%%%%%%%%%%%%%%%%%%%%%%%%%%%%%%%%%%%%%%%%%%%%%%%

\end{document}